\numberwithin{equation}{section}
\theoremstyle{plain}
\newtheorem{theorem}{Theorem}[section]
\newtheorem{corollary}[theorem]{Corollary}
\theoremstyle{definition}
\newtheorem{example}[theorem]{Example}
\newtheorem{definition}[theorem]{Definition}
\def\0{{\bf 0}}
\def\G{\mathbb{G}}
\def\NN{\mathbb{N}}
\def\R{\mathcal{R}}
\def\ZZ{\mathbb{Z}}
\def\RR{\mathbb{R}}
\def\QQ{\mathbb{Q}}
\def\m{\frak{m}}
\def\n{\frak{n}}
\DeclareMathOperator{\supp}{Supp}
\DeclareMathOperator{\tor}{Tor}
\DeclareMathOperator{\pos}{Pos}
\begin{document}


\author{Kamran Lamei \and Siamak Yassemi}
\address[Kamran Lamei]{School of Mathematics, Statistics and Computer Science \\ University of Tehran \\ Tehran \\ Iran}
\email{Kamran Lamei@ut.ac.ir}
\address[Siamak Yassemi]{School of Mathematics, Statistics and Computer Science \\ University of Tehran \\ Tehran \\ Iran}
\email[corresponding author]{yassemi@ut.ac.ir}

\title{graded Betti numbers of good filtrations}

\keywords{Betti numbers, Good filtrations, Vector partition function}
\subjclass[2000]{13A30, 13D02, 13D40, 13B22}
\begin{abstract}
The asymptotic behavior of graded Betti numbers of powers of homogeneous ideals in a polynomial ring
over a field has recently been reviewed. We extend quasi polynomial behavior of graded Betti numbers of powers of homogenous ideals to $\ZZ$-graded algebra over Notherian local ring. Furthermore our main result treats the Betti table of  filtrations which is finite or integral over the Rees algebra.

\end{abstract}

\maketitle


\section{Introduction}
A significant result on the Castelnuovo-Mumford regularity of powers of homogeneous ideal $I$ in a polynomial ring $S$ shows that the maximal degree of the i-th syzygy of $I^t$ is a linear function of $t$ for $t$ large enough. This result have various generalizations including a case $I$ is a homogeneous ideal in  standard graded algebras over a Noetherian ring. Bagheri, Chardin and  H\`a promote the content in \cite{BCH} through investigating of the eventual behavior of all the minimal generators of the $i$-th syzygy module of $MI^t$ where $M$ is a finitely generated $Z$-graded S-module. Their approach formed from the fact that the module $\oplus_{t} \tor_{i}^{S} (MI^{t} , k )$ for a homogeneous ideal $I$ in graded ring $S$, has a structure of a finitely generated graded module over a non-standard graded polynomial ring over $k$, from which one can conclude the behavior of $\tor_{i}^{S} (I^{t} , k )$ when $t$ varies.\\

In the case that $I$ is generated by the forms of the same degree, an interesting result in \cite{BCH} shows that the Betti tables of modules $MI^t$ could be encoded by a variant of Hilbert-Serre polynomials. This is a refinement of asymptotic stability of total Betti numbers proved by Kodiyalam \cite{kodiyalam1993homological}. The question that track the asymptotic behavior of graded Betti numbers in the case that ideal $I$ is generated by forms of degrees (not necessarily equal) $d_1,\ldots ,d_r$  was the core of our previous work \cite{BK}. The fact that $\oplus_{t} \tor_{i}^{S} (I^{t} , k )$ is a finitely generated over a multigraded polynomial ring $B=K[T_1, \ldots,T_r]$ endowed $(d_i , 1)$ to each variables $T_i$ for $1\leq i \leq r$, guarantees a bigraded  minimal free $B$-resolution with free module $\oplus B(-a,-b)^{\beta_{(i,(a,b))}}$ at the homological degree $i$. As a consequence, we proved in \cite{BK} that $\ZZ^2$ can be splitted into a finite number of regions (see Figure \ref{regions}) such that each region  corresponds to quasi-polynomial behavior of the Betti numbers of homogeneous ideals in a polynomial ring over a field. Accordingly, the graded Betti tables of power $I^n$ could be encoded by the a set of polynomials for $n$ large enough. More generally on the stabilization of graded Betti numbers for a collection of graded ideals we use the fact that the module

$$
B_i:=\oplus_{t_1,\ldots ,t_s}\tor_i^R(MI_1^{t_1}\cdots I_{s}^{t_s},k)
$$

is a finitely generated $(\ZZ^p\times \ZZ^s)$-graded ring over $k[T_{i,j}]$ setting $\deg (T_{i,j})=(\deg (f_{i,j}),e_i)$
with $e_i$ the $i$-th canonical generator of $\ZZ^s$ and for fixed $i$ the $f_{i,j}$ form a set of minimal generators of  $I_i$. We proved in \cite{BK} that the chamber associated to  a positive $\ZZ^d$-grading of $R:=k[T_{i,j}]$ splitted $\ZZ^d$ into a finite maximal cells where the eventual behaviour of graded Betti numbers is encoded by the a set of polynomials.\\
\begin{figure}
\includegraphics[scale=0.20]{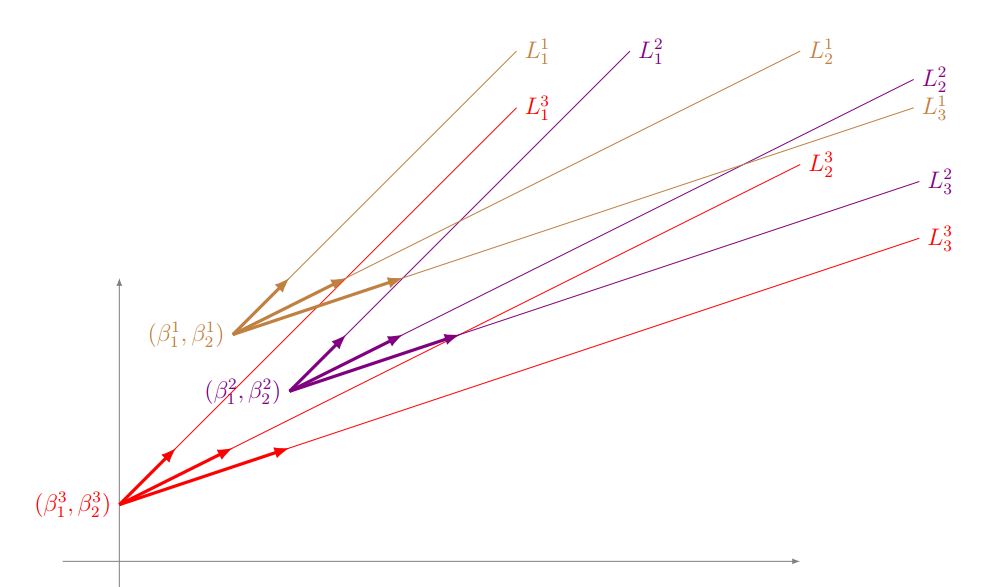}
\xymatrix{& \ar@{~>}[rrr]_{\mathrm{as\,\, degrees\,\,goes\,\, large\,\, enough}}^-{In\,\, the\,\, case \,of\, one\,\, graded\,\, ideal}&&&}
\includegraphics[scale=0.20]{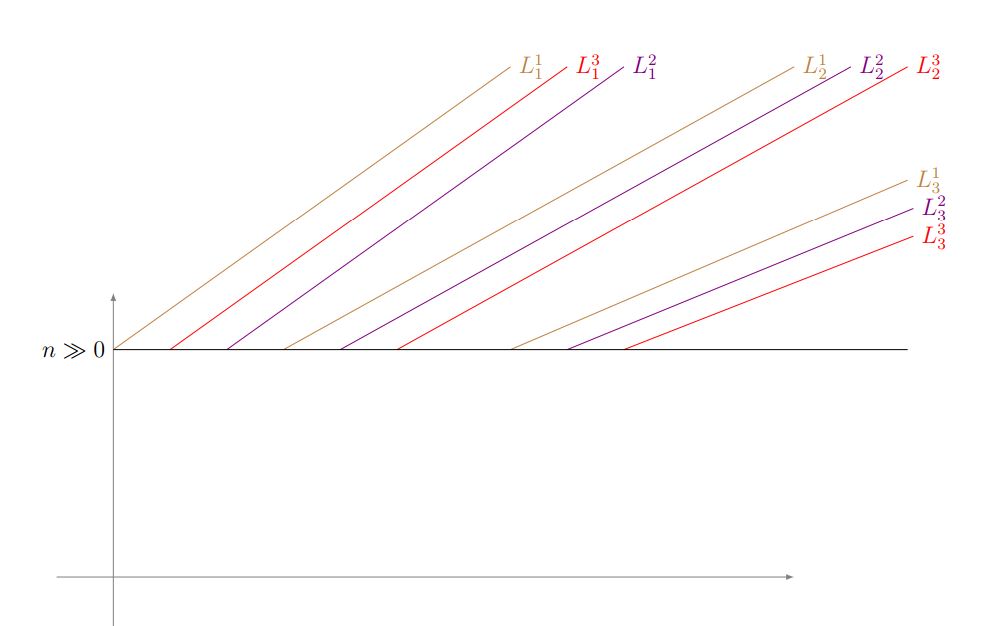}
\caption{}
\label{regions}
\end{figure}
Let $S=A[x_1,\ldots,x_n]$  be a graded algebra over a commutative noetherian local ring $A$ with residue field $k$ and $I \subseteq S$ be a homogeneous ideal.
In Theorem \ref{garaded algebra} we generalize the quasi-polynomial behavior of the Betti numbers  to the case of homogeneous ideals in $G$-graded algebra over Notherian local ring. By using the concept of vector partition function we present an effective method to find such a set of polynomials.  roughly speaking, vector partition function is the number of ways where a vector decompose as a linear combination with nonnegative integral coefficients of a fixed set of vectors. On the other hand, let $e_i$ be the $i$-th standard basis of the space $\RR^r$ for $1\leqslant i \leqslant r$ and suppose that linear map $f : \RR^{n} \rightarrow \RR^d $ is defined by $f(e_i) = v_i$. The Rational convex polytope can be written as \\
$$
  P(b) := f^{-1} (b) \cap \RR_{\geqslant 0}^{r} = \{ x \in \RR^{r} | A x = a  ; x \geqslant 0 \}
$$
where $A$ is the matrix  of $f$. If $b$ is in the interior of  $\pos(A):=\{\sum_{i=1}^{n} \lambda_ia_i\in\RR^d | \lambda_i\geq 0, 1\leq i\leq n\}$, the polytope $P(b)$ has dimension $n-d$. One can see that evaluating the vector partition is equivalent to computing the number of integral points in a rational convex polytope. Accordingly in order to compute the Hilbert function $HF(S;b)$ of a polynomial rings $S$ over an infinite field $K$ of characteristic $0$ weighted by a set of vectors in $\ZZ^r$  we use an algorithm of enumeration the integral points in the polytope $P(b)$.

 A classical result of G.Pick [1899] for a two-dimensional polygone, states that if  $P \subset \RR^2$ is an integer polygon, then the number of integer points inside 
$P$ is
$
|P \cap \ZZ^2| = area (P) + \frac{| \partial P \cap \ZZ^2|}{2} + 1
$. One of important generalizations of the Pick's formula is the theorem of Ehrhart. In the general, for any rational polyhedron $P \subset \RR^n$ we consider following generationg function:
$$
f(P,\textbf{x}) = \sum_{m \in P \cap \ZZ^n} \textbf{x}^{m}
$$
where $m = (m_{1},\cdots ,m_{n})$ and $\textbf{x}^m = x_{1}^{m_1}\cdots x_{n}^{m_n}$. By Brion's theorem \cite{B}, the generating function of the polytope $P$ is equal to the sum of the generating functions of its vertex cones. More precisely, 
$$
f (P; \textbf{x}) = \sum_{m \in P \cap \ZZ^n} \textbf{x}^{m} = \sum_{v \in \Omega(P)} f (K; \textbf{v})
$$
where $\Omega (P)$ is the set of vertices of $P$. In order to find the generation function of arbitrary pointed cones, Stanley \cite{S} gives a triangulation of a rational cone into simplicial cones. Barvinok proved \cite{BP} that every rational polyhedral cone can be triangulated into unimodular cones.  The method of Barvinok enabling us to calculate the generating function of the above polytope depending on $b$, before that we should mention that the polytope $P(b)$ associated to the matrix $A$ is not full dimensional so to use the Barvinok metod we need to transform  $P(b)$ to polytope $Q$ which is full dimensional and the integer points of $Q$ are in one-to-one correspondence to the integer points of $P(b)$. In the example \ref{4-gen} we show the implementation of above mothod in order to find out an effective method for our objective.

Finally in order to be able to investigate the behavior of graded Betti numbers of integral closures and rattliff-Rush closure of powers of ideals, we use the notion of $I$-good filtrations. This is a series $\mathcal{J}=\lbrace \mathcal{J}_n \rbrace_{ n\geq 0}$ of ideals such that $\oplus_{n\geqslant 0} \mathcal{J}_n$ is a finite module over the Rees ring. Our result on graded Betti numbers of $I$-good filtrations takes the following form:

\textbf{Theorem.}
Let $S = A\left[x_{1}, \cdots , x_{n}\right]$ be a graded algebra over a Noetherian local ring $(A, m) \subset S_{0}$. Let $\mathcal{J}=\lbrace \mathcal{J}_n \rbrace_{ n\geq 0}$ be an $I$-good filtration of ideals $\mathcal{J}_n$ of $S$ and $\mathcal{J}_1 = (f_1,f_2,...,f_r)$    
with $\deg f_i = d_i$ be $\ZZ$-homogenous ideal in $S$, and let $R=S[T_1, \ldots, T_r]$ be a bigraded polynomial extension of $S$
with $\deg(T_i)=(d_i, 1)$ and $\deg (a) = (\deg (a)  , 0) \in \ZZ \times\{0\}$ for all $a \in S$. Then,\\
\begin{itemize}

\item For all $i$ \\

 $\tor_{i}^{R}(\R_{\mathcal{J}},k)$ is finitely generated $k[T_1, \ldots, T_r]$-module.\\

\item  There exist,
$t_0,m,D\in \ZZ$, linear functions $L_i(t)=a_i t+b_i$,
for $i=0,\ldots ,m$, with $a_i$ among the degrees
of the minimal generators of $I$ and $b_i\in \ZZ$, and polynomials $Q_{i,j}\in \QQ [x,y]$ for
$i=1,\ldots ,m$ and $j\in 1,\ldots ,D$, such that, for $t\geq t_0$,

(i) $L_i(t)<L_j(t)\ \Leftrightarrow\ i<j$,

(ii) If $\mu <L_0(t)$ or $\mu >L_m(t)$, then $\tor_i^S(\mathcal{J}_t, k)_{\mu}=0$.

(iii)  If $L_{i-1} (t)\leq \mu \leq L_{i}(t)$ and
$a_i t-\mu \equiv j\mod (D)$, then
$$
\dim_k\tor_i^S(\mathcal{J}_t, k)_{\mu}=Q_{i,j}(\mu ,t).\\\\
$$

\end{itemize}

It's worth mentioning that even in a simple situation, namely when $I$ is a complete intersection ideal quite many polynomials are involved to give the asymptotic behavior of the graded Brtti numbers of powers of $I$.

\section{Preliminaries} \label{sec.prel}

In this section, we give a brief recall on necessary notations and terminology used in the article.


Let $S=k[x_1, \ldots, x_n]$ be a polynomial ring over a field $k$. Let $\G$ be an abelian group.  A $\G$-grading of $S$ is a morphism $\deg : \ZZ^n \longrightarrow \G $ and If  $G$ is torsion-free and $S_0=k$, the grading is positive. Criterions for positivity are  given in \cite[8.6]{ms}. When $\G=\ZZ^d$ and the grading is positive, (generalized) Laurent series are associated to finitely generated graded modules:

\begin{definition}
The Hilbert function of a finitely generated module $M$ over a positively graded polynomial ring is the map:
$$\begin{array}{llll}
HF(M; -):&\ZZ^d&\longrightarrow& \NN\\
&\mu&\longmapsto&\dim_k(M_\mu).
\end{array}$$
The Hilbert series of $M$ is  the Laurent series $$H(M;t)=\sum_{{\bf \mu}\in \ZZ^d}\dim_k(M_{\mu})t^{\mu}.$$
\end{definition}


Let $M$ be a finitely generated $\ZZ^d$-graded $S$-module. It admits a
finite minimal graded free $S$-resolution
$$\mathbb{F}_\bullet: 0\rightarrow F_u \rightarrow \ldots \rightarrow F_1\rightarrow F_0\rightarrow M\rightarrow 0.$$
Writing $$F_i=\oplus_\mu S(-\mu)^{\beta_{i,\mu}(M)},$$ the minimality
shows that  $\beta_{i,\mu}(M)=\dim_k\left(\tor_i^S(M,k)\right)_\mu$,
as the maps of $\mathbb{F}_\bullet\otimes_S k$ are zero.
We also recall that the support of a $\ZZ^d$-graded module $N$ is
$$\supp_{\ZZ^d} (N):=\{\mu\in \ZZ^d | N_\mu\neq0\} .$$



\subsection{Partition function}
Let $e_i$ be the standard basis of the space $\RR^r$ for $1\leqslant i \leqslant r$. Let $f$ be a linear map 
$f : \RR^{r} \rightarrow \RR^d $ defined by $f(e_i) = v_i$ and denote by $V$ the linear span of $\left[ v_1, \cdots , v_r\right]$. For $a \in V$ consider the following convex polytope: \\
$$
  P(a) := f^{-1} (a) \cap \RR_{\geqslant 0}^{r} = \{ x= (x_1, \cdots , x_r) \in \RR^{r} |  \sum_{i=1}^{r} x_{i}v_{i} = a  ; x \geqslant 0 \}.
$$
\begin{definition}\label{partition function-def}
The function $\varphi : \NN^d \rightarrow \NN$ defined by $\varphi_{A}(a) = \sharp ( f^{-1} (a) \cap \ZZ_{\geqslant 0}^{r})$ is called vector partition function corresponding to the matrix $A = (v_1, \cdots , v_r)$.\\
\end{definition}

For more details about the vector partition functions in particular the definition of chambers, chamber complex and quasipolynomials we use the terminology of  \cite{BV, Strm}.

 Now, we recall the vector partition function theorem:

 \begin{theorem}\label{vector partition}(See \cite[Theorem 1]{Strm})
 For each chamber $C$ of maximal dimension in the chamber complex of $A$,
 there exist a polynomial $P$ of degree
 $n-d$, a collection of polynomials  $Q_\sigma$
 and  functions $\Omega_\sigma: G_\sigma\setminus\{0\}\rightarrow \QQ$ indexed by non-trivial $\sigma\in \Delta(C)$
such that, if $u\in \NN A\cap \overline{C}$,
 $$\varphi_A(u)=P(u)+\sum\{\Omega_\sigma([u]_\sigma).Q_\sigma(u) : \sigma\in \Delta(C) , [u]_\sigma\neq 0\}$$
where $[u]_\sigma$ denotes the image of $u$ in $G_\sigma$. Furthermore, $\deg (Q_\sigma )=\#\sigma-d$.\\
\end{theorem}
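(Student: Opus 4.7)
The plan is to reduce the theorem to the vector partition function theorem (Theorem~\ref{vector partition}) via a finiteness argument for the Tor module, and then to translate the resulting chamber decomposition of $\RR^2$ into linear strips in the $(\mu, t)$-plane, essentially adapting the strategy of \cite{BK} from powers of an ideal to the filtered setting.

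For the first bullet, the key input is that $\mathcal{J}$ is an $I$-good filtration: by definition $\R_{\mathcal{J}} = \bigoplus_{n \geq 0} \mathcal{J}_n$ is finitely generated over the Rees algebra $\R(I)$. The surjection $R = S[T_1, \ldots, T_r] \twoheadrightarrow \R(I)$, $T_i \mapsto f_i$, is bigraded (with $\deg T_i=(d_i,1)$), so $\R_{\mathcal{J}}$ inherits a finitely generated bigraded $R$-module structure. Take a minimal bigraded $R$-free resolution $F_\bullet \to \R_{\mathcal{J}}$; since $R$ is bigraded Noetherian and $\R_{\mathcal{J}}$ finitely generated, each $F_j$ is a finite sum of shifted copies of $R$. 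Applying $-\otimes_S k$ (equivalently, $-\otimes_R (R\otimes_S k)$, so that $k$ is identified with $R/(\mathfrak{m}_A, x_1,\ldots,x_n)R = k[T_1,\ldots,T_r]$), each $F_j\otimes_S k$ becomes a finite free $k[T_1,\ldots,T_r]$-module. Hence $\tor_i^R(\R_{\mathcal{J}}, k)$ is a subquotient of a finitely generated module over the Noetherian ring $k[T_1,\ldots,T_r]$, therefore itself finitely generated.

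For the second bullet, set $B_i := \tor_i^R(\R_{\mathcal{J}}, k)$; by the base-change computation above, its $(\mu, t)$-bigraded piece equals $\tor_i^S(\mathcal{J}_t, k)_\mu$. A minimal bigraded $k[T_1,\ldots,T_r]$-free resolution of $B_i$ expresses $\dim_k (B_i)_{(\mu, t)}$ as an alternating sum, over finitely many shifts $(\nu_j, s_j)$ and signs, of the Hilbert function of $k[T_1,\ldots,T_r]$ evaluated at $(\mu - \nu_j,\, t - s_j)$. With the bigrading $\deg T_i = (d_i, 1)$, this Hilbert function is exactly the vector partition function $\varphi_A$ of the matrix $A = \bigl((d_1, 1), \ldots, (d_r, 1)\bigr) \in \ZZ^{2 \times r}$. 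Applying Theorem~\ref{vector partition} to each summand and taking a common refinement of the shifted chamber complexes yields a finite decomposition of $\pos(A) \subset \RR^2$ into cones on which $\dim_k (B_i)_{(\mu, t)}$ is quasi-polynomial in $(\mu, t)$. Every bounding ray of such a cone is (a translate of) a ray $\RR_{\geq 0}(d_i, 1)$, so slicing by $t = \text{const}$ gives bounding lines of the form $\mu = a_k t + b_k$ with $a_k\in\{d_1,\ldots,d_r\}$; ordering by slope produces $L_0(t) < \cdots < L_m(t)$ for $t \gg 0$, yielding property~(i). Property~(ii) is the vanishing of $\varphi_A$ outside $\pos(A)$. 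Property~(iii) follows because on each strip the quasi-polynomial depends on $(\mu, t)$ only through its value and a residue class; choosing $D$ to be a common multiple of all periods appearing lets us re-index the polynomials uniformly by $a_i t - \mu \pmod{D}$ as required.

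The main obstacle is the packaging in~(iii): Theorem~\ref{vector partition} describes the quasi-polynomial on a chamber via residues in certain finite abelian groups $G_\sigma$ whose orders vary with $\sigma$, and one must repackage this data uniformly across chambers as a single residue $a_i t - \mu \pmod{D}$. This is routine bookkeeping but is the most delicate part of the argument; the rest is a direct adaptation of \cite{BK} once finite generation of $\R_{\mathcal{J}}$ (furnished by the $I$-good hypothesis) and the flatness of $R$ over $S$ are in hand.
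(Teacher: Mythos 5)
Your write-up does not address the statement in question. The statement is Theorem \ref{vector partition}, Sturmfels' structure theorem for the vector partition function: on the closure of each maximal chamber $C$, $\varphi_A$ decomposes as a polynomial $P$ of degree $n-d$ plus correction terms $\Omega_\sigma([u]_\sigma)\,Q_\sigma(u)$ indexed by the nontrivial $\sigma\in\Delta(C)$, with $\deg Q_\sigma=\#\sigma-d$. The paper offers no proof of this result (it is quoted from \cite[Theorem 1]{Strm}), so an actual proof attempt would have to engage with the combinatorics of the chamber complex itself --- for instance via the Brion--Vergne residue formulas, or an inclusion--exclusion over (unimodular) triangulations of the cones spanned by subsets $\sigma$ of the columns of $A$, together with the lattice-index analysis that produces the finite groups $G_\sigma$ and the degree count for $Q_\sigma$. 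None of that appears in your argument.

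What you have written is instead a proof sketch of Theorem \ref{Tor-Hilbert filtrations}, the paper's main result on $I$-good filtrations: finite generation of $\tor_i^R(\mathcal{R}_{\mathcal J},k)$ over $k[T_1,\ldots,T_r]$ via a bigraded resolution of $\mathcal{R}_{\mathcal J}$, followed by a reduction of $\dim_k\tor_i^S(\mathcal J_t,k)_\mu$ to shifted copies of the Hilbert function of $k[T_1,\ldots,T_r]$, which is then controlled by the chamber decomposition. Indeed your opening sentence announces a reduction \emph{to} Theorem \ref{vector partition}, i.e.\ you assume as a black box exactly the statement you were asked to prove, which makes the submission circular with respect to the assigned task. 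As an argument for Theorem \ref{Tor-Hilbert filtrations} your sketch is close in spirit to the paper's own proof of that theorem, but it cannot stand as a proof of Theorem \ref{vector partition}; for that you would need to supply the combinatorial core from \cite{Strm} (or an independent derivation).
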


\begin{corollary}\label{vector partition2}
 \cite{BK}For each chamber $C$ of maximal dimension in the chamber complex of $A$,
 there exists a collection of polynomials  $Q_\tau$ for $\tau\in \ZZ^d/\Lambda$ such that
 $$
 \varphi_A(u)=Q_\tau (u), \ \hbox{if}\ u\in \NN A\cap \overline{C}\ \hbox{and}\ u \in \tau+\Lambda_{C},
 $$
 where $\Lambda_{C} =  \cap_{\sigma \in \Delta(C)} \Lambda_{\sigma}$
\end{corollary}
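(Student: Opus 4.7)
The plan is to promote the $I$-good filtration hypothesis to finite generation over the bigraded polynomial extension $R$, take a minimal bigraded free $R$-resolution, and then read each $\tor$-module against the two-column vector partition matrix $A=\begin{pmatrix} d_1 & \cdots & d_r \\ 1 & \cdots & 1 \end{pmatrix}$ so that Corollary \ref{vector partition2} supplies the quasi-polynomial pieces $Q_{i,j}$ on each chamber.

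\textbf{Step 1 (finite generation).} Since $\mathcal{J}$ is $I$-good, $\R_{\mathcal{J}}=\bigoplus_{n\geq 0}\mathcal{J}_n$ is a finite module over the Rees algebra $\R_I=\bigoplus_{n\geq 0}I^n$. The map $R=S[T_1,\ldots,T_r]\twoheadrightarrow \R_I$ sending $T_i\mapsto f_i$ is bigraded surjective (with the given bidegrees), making $\R_{\mathcal{J}}$ a finitely generated bigraded $R$-module. Because $R$ is Noetherian and $T_i\notin \m R$, the change-of-rings spectral sequence together with $R\otimes_S k=k[T_1,\ldots,T_r]$ shows $\tor_i^R(\R_{\mathcal{J}},k)$ is a finitely generated bigraded $k[T_1,\ldots,T_r]$-module for every $i$. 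This establishes the first bullet.

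\textbf{Step 2 (relate $\tor^S$ of $\mathcal{J}_t$ to $\tor^R$ of $\R_{\mathcal{J}}$).} Each bigraded strand $\tor_i^R(\R_{\mathcal{J}},k)_{(\mu,t)}$ equals $\tor_i^S(\mathcal{J}_t,k)_\mu$ (the Koszul complex on $x_1,\ldots,x_n$ over $R$ computes $\tor^R(-,k)$ on bidegree-$t$ strands as $\tor^S$ of $\mathcal{J}_t$, after killing $\m$). Pick a minimal bigraded free resolution of the $B=k[T_1,\ldots,T_r]$-module $\tor_i^R(\R_{\mathcal{J}},k)$; summands have the shape $B(-a,-b)^{\beta_{i,(a,b)}}$, and so
\[
\dim_k\tor_i^S(\mathcal{J}_t,k)_\mu \;=\; \sum_{(a,b)}(-1)^{?}\beta_{i,(a,b)}\cdot \varphi_A\!\bigl((\mu-a,\,t-b)\bigr),
\]
with the usual alternating sum from the resolution and $\varphi_A$ the vector partition function of $A$. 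Only finitely many $(a,b)$ appear since the $B$-module is finitely generated.

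\textbf{Step 3 (chamber decomposition and linear boundaries).} The chamber complex of $A$ in $\ZZ^2$ is determined by the rays through the columns $(d_i,1)$. Sort the $d_i$ so that $d_{i_1}\leq\cdots\leq d_{i_m}$; the boundary rays give linear functions $\mu=d_{i_k}\,t$, and after the finite shift by the $(a,b)$'s from Step 2 one obtains translated lines $L_k(t)=a_k t+b_k$ with $a_k$ among the $d_i$. For $t\gg 0$ these translates are ordered consistently, giving (i), and outside the convex hull of all columns $\varphi_A$ vanishes, giving (ii). Apply Corollary \ref{vector partition2} in each maximal chamber: on a fixed chamber $C$, $\varphi_A(u)=Q_\tau(u)$ whenever $u\in \tau+\Lambda_C$, with $\Lambda_C$ a full sublattice of $\ZZ^2$. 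The finite sum in Step 2 then expresses $\dim_k\tor_i^S(\mathcal{J}_t,k)_\mu$ as a polynomial $Q_{i,j}(\mu,t)$ depending only on the chamber (which encodes $i$) and the residue class $j$ of $a_it-\mu$ modulo a common index $D$ that clears all the sublattices $\Lambda_C$ appearing. This yields (iii).

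\textbf{Main obstacle.} Step 1's change of rings is routine but the most delicate point is making sure the bigrading and the local ring $(A,\m)$ interact cleanly — concretely, that $\tor^R(-,k)$ really only depends on the $\ZZ$-grading inherited from $S/\m S$, so that bidegree strands of $\tor^R(\R_{\mathcal{J}},k)$ are genuinely the Betti ranks of $\mathcal{J}_t$ over $S$. The second genuine difficulty is bookkeeping in Step 3: one must verify that the finitely many shifts $(a,b)$ produced by the minimal $B$-resolution can be absorbed into a single common period $D$ and a single ordered family of $m+1$ linear boundaries whose slopes lie in $\{d_1,\ldots,d_r\}$, not in some larger set. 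This is handled by taking $D$ to be a common multiple of the indices $[\ZZ^2:\Lambda_C]$ across all chambers $C$ and enlarging $t_0$ so that all chamber-translates become strictly ordered; once this is done, (i)–(iii) follow directly from Corollary \ref{vector partition2} applied term-by-term to the resolution.
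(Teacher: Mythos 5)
Your proposal does not prove the statement in question. The statement is Corollary \ref{vector partition2}, a purely combinatorial fact about the vector partition function $\varphi_A$: on each maximal chamber $C$, $\varphi_A$ restricts to a genuine polynomial $Q_\tau$ on every residue class $\tau$ modulo the lattice $\Lambda_C=\cap_{\sigma\in\Delta(C)}\Lambda_\sigma$. What you have written is instead a sketch of the proof of Theorem \ref{Tor-Hilbert filtrations} (the $I$-good filtration result about $\tor_i^S(\mathcal{J}_t,k)$), and in Step 3 you explicitly \emph{invoke} Corollary \ref{vector partition2} as a known ingredient. Relative to the actual task this is circular: you are assuming the statement you were asked to prove and using it to derive a different theorem of the paper. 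Nothing in your argument addresses why $\varphi_A$ should be polynomial on cosets of $\Lambda_C$.

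The intended argument is short and comes straight from Theorem \ref{vector partition} (Sturmfels): on $\NN A\cap\overline{C}$ one has $\varphi_A(u)=P(u)+\sum_{\sigma}\Omega_\sigma([u]_\sigma)\,Q_\sigma(u)$, where the only non-polynomial dependence on $u$ sits in the factors $\Omega_\sigma([u]_\sigma)$, and each of these depends on $u$ only through its image in $G_\sigma=\ZZ^d/\Lambda_\sigma$. Fixing the class of $u$ modulo $\Lambda_C=\cap_{\sigma\in\Delta(C)}\Lambda_\sigma$ fixes all the classes $[u]_\sigma$ simultaneously, so each $\Omega_\sigma([u]_\sigma)$ becomes a constant $c_{\sigma,\tau}$ and $\varphi_A(u)=P(u)+\sum_\sigma c_{\sigma,\tau}Q_\sigma(u)=:Q_\tau(u)$ is a polynomial on that coset. (The paper's follow-up remark then notes that one may replace $\Lambda_C$ by the finer global lattice $\Lambda$, since the class of $u$ modulo $\Lambda$ determines its class modulo $\Lambda_C$.) You would need to supply this observation; the homological machinery in your Steps 1--3 is not relevant here.
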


Notice that setting $\Lambda$ for the intersection of the lattices
$\Lambda_\sigma$ with $\sigma$ maximal, the class of $u$ mod $\Lambda$  determines the class of $u$ mod $\Lambda_{C}$, hence the corollary holds with $\Lambda$ in place of $\Lambda_{C}$.\\

\section{Structure of  $\tor$ module of Rees algebra}

Let $S=A[x_1, \ldots, x_n]$ be a graded algebra over a commutative noetherian local 
ring $ S_0= (A,\textit{m})$ with residue field $k$ and set  $R=S[T_1, \ldots, T_r]$ and $B=k[T_1, \ldots, T_r]$ . We set $\deg(T_i)=(d_i, 1)$ and extended the grading from $S$ to $R$ by setting $\deg(x_i) = (\deg(x_i) , 0)$.
Let $M$ be a finitely generated graded $S$-module and $I$ be a graded $S$-ideal generated in degrees $d_1,\cdots d_r$. In this section we use the important  fact which it provides a $B$-structure on $\oplus_{t} \tor_{i}^{S} (MI^{t} , k )$ that were already at the center of the work  \cite{BCH} .

\begin{theorem}\label{garaded algebra}
Let  $S=A[x_1, \ldots, x_n]$ be a $\ZZ$-graded algebra over Noetherian local ring $(A,m,k)$. Let $I = (f_1,f_2,...,f_r)$ a homogeneous ideal in $S$ with $deg f_i = d_i$, and let $R=S[T_1, \ldots, T_n]$ be a bigraded polynomial extension of $S$ with $\deg(T_i)=(d_i, 1)$ and $\deg (a) = (\deg (a)  , 0) \in \G\times\{0\}$ for all $a \in S$. Then there exists,
$t_0,m,D\in \ZZ$, linear functions $L_i(t)=a_i t+b_i$,
for $i=0,\ldots ,m$, with $a_i$ among the degrees
of the minimal generators of $I$ and $b_i\in \ZZ$, and polynomials $Q_{i,j}\in \QQ [x,y]$ for
$i=1,\ldots ,m$ and $j\in 1,\ldots ,D$, such that, for $t\geq t_0$,

(i) $L_i(t)<L_j(t)\ \Leftrightarrow\ i<j$,

(ii) If $\mu <L_0(t)$ or $\mu >L_m(t)$, then $\tor_i^S(I^t, k)_{\mu}=0$.

(iii)  If $L_{i-1} (t)\leq \mu \leq L_{i}(t)$ and
$a_i t-\mu \equiv j\mod (D)$, then
$$
\dim_k\tor_i^S(I^t, k)_{\mu}=Q_{i,j}(\mu ,t).
$$
\end{theorem}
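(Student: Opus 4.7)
The plan is to follow the strategy of \cite{BCH, BK} but to replace the use of a finite free $S$-resolution of $k$ (unavailable when $A$ is not a field) with a flat resolution of the Rees algebra coming from the surjection $R \twoheadrightarrow \mathcal{R}(I)$. Once the module $\oplus_t \tor_i^S(I^t, k)$ is identified as a finitely generated bigraded module over $B := k[T_1,\ldots,T_r]$, the chamber-complex machinery of Corollary \ref{vector partition2} applies and yields the quasi-polynomial description claimed in the statement.

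First I would establish the $B$-module structure. View the Rees algebra $\mathcal{R}(I) = \oplus_{t\geq 0} I^t$ as a finitely generated bigraded $R$-module via the surjection $R \twoheadrightarrow \mathcal{R}(I)$ sending $T_i \mapsto f_i$, and choose a bigraded free $R$-resolution $F_\bullet \to \mathcal{R}(I)$. Since $R$ is $S$-flat, $F_\bullet$ is a complex of $S$-flat modules, hence
\begin{equation*}
\tor_i^S(\mathcal{R}(I), k) \;=\; H_i(F_\bullet \otimes_S k),
\end{equation*}
where $R \otimes_S k = B$. Each $F_j \otimes_S k$ is a free bigraded $B$-module, so the homology is a finitely generated bigraded $B$-module. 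Because $\tor$ commutes with direct sums, the component of this module in bidegree $(\mu, t)$ is precisely $\tor_i^S(I^t, k)_\mu$.

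Next I would compute the Hilbert function. By the Hilbert syzygy theorem over $B$, the $B$-module $T := \tor_i^S(\mathcal{R}(I), k)$ admits a finite minimal bigraded free resolution
\begin{equation*}
0 \to G_N \to \cdots \to G_0 \to T \to 0, \qquad G_j = \oplus_{(\alpha,\beta) \in E_j} B(-\alpha,-\beta)^{c_{j,(\alpha,\beta)}},
\end{equation*}
with each $E_j$ finite, whence
\begin{equation*}
\dim_k \tor_i^S(I^t, k)_\mu \;=\; \sum_{j,(\alpha,\beta)} (-1)^j c_{j,(\alpha,\beta)}\, \varphi_A\bigl((\mu,t)-(\alpha,\beta)\bigr),
\end{equation*}
where $\varphi_A$ is the vector partition function attached to the $2 \times r$ matrix $A$ with columns $(d_j, 1)$. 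Applying Corollary \ref{vector partition2}, the chamber complex of $A$ in $\RR^2$ is cut out by the rays $\RR_{\geq 0}(d_j, 1)$, i.e.\ the lines $\mu = d_j t$, and on each maximal chamber and each lattice-coset $\varphi_A$ agrees with a polynomial in $(\mu, t)$. The finitely many shifts $(\alpha, \beta)$ translate these walls to affine lines $\mu = d_j t + c$, $c \in \ZZ$. For $t \geq t_0$ these affine walls are totally ordered by value, and enumerating them as $L_0(t) < L_1(t) < \cdots < L_m(t)$ with $L_i(t) = a_i t + b_i$ gives (i), each slope $a_i$ being one of the $d_j$. On each strip $L_{i-1}(t) \leq \mu \leq L_i(t)$ the sum above is a single quasi-polynomial in $(\mu, t)$; grouping terms by the residue of $a_i t - \mu$ modulo a common period $D$ (a multiple of all lattice indices appearing in Corollary \ref{vector partition2}) produces the polynomials $Q_{i,j}$ of (iii). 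Part (ii) is immediate because $\varphi_A$ vanishes off $\pos(A)$, so the shifted sum vanishes off the affine region bounded by $L_0(t)$ and $L_m(t)$.

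The main obstacle is the first step: in \cite{BCH, BK} the $B$-module structure on $\oplus_t \tor_i^S(I^t, k)$ was built using a finite free $S$-resolution of $k$, a tool that disappears when $A$ is only Noetherian local. Routing the computation instead through a flat $R$-resolution of $\mathcal{R}(I)$ (which uses only Noetherianity of $R$ and flatness of $R$ over $S$) should restore finite generation over $B$ with no hypothesis on $A$ beyond Noetherian. Once this is secured, the remaining steps are essentially a specialization of the multigraded combinatorial argument of \cite{BK} to the two-dimensional chamber complex of the single-ideal case.
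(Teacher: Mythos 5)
Your proposal is correct, but it reaches the key finite-generation step by a genuinely different route from the paper's proof of this theorem. The paper argues that $\bigoplus_t \tor_i^S(I^t,k)$ is a finitely generated $B$-module by first quoting \cite{BCH} for finite generation of $\bigoplus_t\tor_i^S(I^t,A)$ over the Rees-type polynomial ring and then running the change-of-rings spectral sequence $E^2_{p,q}=\tor_p^A(\tor_q^S(I^t,A)_\nu,k)\Rightarrow\tor_{p+q}^S(I^t,k)_\nu$ associated to $S\to A$; you instead compute $\tor_i^S(\R_I,k)$ directly as $H_i(F_\bullet\otimes_S k)$ for a bigraded free $R$-resolution $F_\bullet$ of $\R_I$, using that free $R$-modules are $S$-flat and that $R\otimes_S k=B$. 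Your argument is more self-contained --- it needs only Noetherianity of $R$ and flatness of $R$ over $S$, and avoids both the external citation and the spectral sequence --- and it is in substance the same strand-by-strand resolution argument the paper itself deploys later in the proof of Theorem \ref{Tor-Hilbert filtrations}, of which Theorem \ref{garaded algebra} is the special case $\mathcal{J}_t=I^t$. For the quasi-polynomial description the paper simply cites \cite[Proposition 4.5]{BK}; your reconstruction via a finite minimal free $B$-resolution of $\tor_i^S(\R_I,k)$, the resulting alternating sum of shifted vector partition functions, and the chamber decomposition of Corollary \ref{vector partition2} is an accurate unpacking of that citation, including the identification of the walls with the shifted rays $\mu=d_jt+c$ and the common period $D$ coming from the lattice indices. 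Two small points you should make explicit: the $F_j$ can be taken of finite rank because $R$ is Noetherian and $\R_I$ is a finitely generated $R$-module (this is what makes $H_i(F_\bullet\otimes_S k)$ finitely generated over $B$), and for part (ii) it is cleanest to observe that $\tor_i^S(\R_I,k)$ is a quotient of $G_0$, so its support lies in the finite union of shifted cones $(\alpha,\beta)+\pos(A)$, rather than relying on the vanishing of the alternating sum term by term.
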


\begin{proof}
 The natural anto map $R \rightarrow \R_{I}:=\bigoplus_{t\geqslant 0} I^{t}$ sending $T_i$ to $f_i$ makes $\R_{I}=\bigoplus_{t\geqslant 0} I^{t}$ a finitely generated graded $R$-module. It was shown in \cite{BCH} that $\tor_{i}^{S}( I^{t} , A )$ is finitely generated $k[T_1, \ldots, T_r]$-module. Let $f: S \rightarrow A$ be the canonical map then there is a graded spectral sequence  with second term $E_{p,q}^{2} = \tor_{p}^{A}(\tor_{q}^{S} (I^{t} , A)_{\nu},k) \Rightarrow \tor_{p+q}^{S} (I^{t} , k )_{\nu}$ therefore $\tor_{i}^{S} (I^{t} , k )$ is finitely generated $k[T_1, \ldots, T_r]$-module. then the result follows from \cite[Proposition 4.5]{BK}.\\
\end{proof}

\begin{example}\label{4-gen}
Let $S=A[x_1,\ldots,x_n]$ be a graded algebra over a commutative noetherian local ring $S_0 =(A,m)$. Let $I\subseteq S$ be a complete intersection ideal of three forms $f_1,f_2,f_3,f_4$ of degrees $3,5,7,9$ respectively. Let $R=S[T_1,T_2,T_3,T_4]$ be a $\ZZ \times \ZZ$-graded polynomial extension of $S$ with $\deg x_i = (1,0)$ and $\deg (T_1)=(3,1)$, $\deg (T_2)=(5,1)$, $\deg (T_3)=(8,1)$ ,$\deg (T_1)= (9,1)$. By Theorem \ref{garaded algebra} $\tor_{i}^{R}(\R_{I},k )$ is finitely generated $B=k[T_1, \ldots, T_4]$-module with assigned  weight. Let $A= \begin{pmatrix}
3 & 5 & 8 & 9 \\
1 & 1 & 1 & 1
\end{pmatrix}$ be the matrix of degrees of $B$ then by  \cite[Proposition 3.1]{BK} the Hilbert function of $B$ at degree $(\nu,n)$ equals to enumeration  lattice points of the following  convex polytope
$$
  P(\nu,n) = \{ x \in \RR^{r} | A x = (\nu,n)  ; x \geqslant 0 \}
$$
 To take advantage of the Barvinok metod  of enumeration  lattice points we need a tranformation of polytope with  one-to-one correspondence to the lattice points by the following procedure explained in \cite{Lo}
 \begin{enumerate}
\item let $P = \{ x\in \RR^n | Ax = a, Bx \leqslant b\}$ be a polytope related to full row-rank $d\times n$ matrix $A$.
\item Find the generators $\{g_1,\cdots , g_{n-d}\}$ of the integer null-space of $A$.
\item Find integer solution $x_0$ to $Ax = a$.
\item Substituting the general integer solution $x = x_{0} + \sum_{i=1}^{n-d}\beta_{i}g_{i}$ into the inequalities $Bx \leqslant b$.
\item By Substitution of (4) we arrive at a new system $C\beta \leqslant c$ which  defines the new polytope $Q = \{\beta \in \RR^{n-d} | C\beta \leqslant c \}$.\\
\end{enumerate}
In order to find out the integer null-space of $A$ we first calculate the Hermite
Normal form (HNF) of $A$ which is
$$
H:=HFN(A)=
\begin{pmatrix}
1 & 0 & 0 & 0 \\
0 & 1 & 0 & 0
\end{pmatrix}
$$
and 
$$
U=
\begin{pmatrix}
1 & 4 & 3 & 2 \\
-2 & -4 & -5 & -3 \\
1 & 1 & 2 & 0 \\
0 & 0 & 0 & 1 
\end{pmatrix}
$$

Here $U$ is a unimodular matrix such that $H = AU$. Then the columns of $U_{1}=\begin{pmatrix}
1 & 4 \\
-2 & -4 \\
1 & 1 \\
0 & 0
\end{pmatrix}$
gives the generators of the integer null-space of $A$. Hence by the above procedure the polytope $Q(\nu,n)$ defined by the solutions of following  system of linear inequalities
$$\left\{\begin{array}{r}
-4\lambda_{2}  -4n-\nu \leq \lambda_{1},\\
\lambda_{1} + 2\lambda_{2} \leq -\nu-2n,\\
\lambda_{1}+\lambda_{2} \geq -\nu-n ,\\
\end{array}\right.$$

\end{example}

\section{structure of  $\tor$ module of Hilbert filtrations}
To study blowup algebras, Northcott and Rees defined the notion of reduction of  an ideal $I$ in a commutative ring $R$. An ideal $J \subseteq I$ is a reduction of $I$ if there exists $r$ such that  $JI^{r} = I^{r+1}$ (equivalently this hold for $r \gg 0$) . An important fact about reduction of ideals is that this property is equivalent to the fact that 
$$
\R_{J} = \oplus_{n} J^{n} \rightarrow \R_{I} = \oplus_{n} I^{n}
$$
is a finite morphism. Okon and Ratliff in \cite{OR} extended the above notion of reduction to the case of filtrations by setting the following definition :\\

\begin{definition}
Let $R$ be a ring, $I$ an $R$-ideal and $\mathcal{J}=\lbrace \mathcal{J}_n \rbrace_{ n\geq 0}$ and $\mathcal{I}=\lbrace \mathcal{I}_n \rbrace_{ n\geq 0}$ two filtration on $R$ :

\item  (1) $\mathcal{J} \leq \mathcal{I}$ if $\mathcal{J}_n \subseteq \mathcal{I}_n$ for all $n\geq 0$.

\item  (2) $\mathcal{J}$ is a reduction of $\mathcal{I}$ if $\mathcal{J} \leq \mathcal{I}$ and there exists a positive integer $d$ such that $\mathcal{I}_n = \sum_{i=0}^{d} \mathcal{J}_{n-i}\mathcal{I}_i$ for all $n\geq 1$.

\item  (3  $\mathcal{J}$ is a $I$-good filtration if $I\mathcal{J}_i \subseteq \mathcal{J}_{i+1}$ for all $i\geq 0$
and $\mathcal{J}_{n+1} = I\mathcal{J}_n$ for all $n\gg 0$.

\end{definition}

Opposite to the ideal case, minimal reductions of a filtration does not exist in general. But 
Hoa  and Zarzuela showed in \cite{HZ} the existence of a minimal reduction for  $I$-good filtrations.

If $\mathcal{J}=\lbrace \mathcal{J}_n \rbrace_{ n\geq 0}$  is an $I$-good filtration on $R$, then $\R_\mathcal{J} := \oplus_{n\geqslant 0} \mathcal{J}_n$  is a finite $\R_{I}$-module \cite[Theorem III.3.1.1]{}. Thit is why we are interested about $I$-good filtration to generalize the previous results.
The following theorem explain the structure of  $\tor$ module of $I$-good filtrations :\\

\begin{theorem}\label{Tor-Hilbert filtrations}
Let $S = A\left[x_{1}, \cdots , x_{n}\right]$ be a graded algebra over a Noetherian local ring $(A, m,k) \subset S_{0}$ . Let $\mathcal{J}=\lbrace \mathcal{J}_n \rbrace_{ n\geq 0}$ be an $I$-good filtration of $\ZZ$-homogeneous  ideals in $S$, and $\mathcal{J}_1 = (f_1,f_2,...,f_r)$    
with $deg f_i = d_i$ . Let $R=S[T_1, \ldots, T_n]$ be a bigraded polynomial extension of $S$
with $\deg(T_i)=(d_i, 1)$ and $\deg (a) = (\deg (a)  , 0) \in \ZZ \times\{0\}$ for all $a \in S$.\\

\item(1)Then for all $i$ : \\

 $\tor_{i}^{R} (\R_{\mathcal{J}},k) $ is a finitely generated $k[T_1, \ldots, T_r]$-module .\\

\item (2) There exist,
$t_0,m,D\in \ZZ$, linear functions $L_i(t)=a_i t+b_i$,
for $i=0,\ldots ,m$, with $a_i$ among the degrees
of the minimal generators of $I$ and $b_i\in \ZZ$, and polynomials $Q_{i,j}\in \QQ [x,y]$ for
$i=1,\ldots ,m$ and $j\in 1,\ldots ,D$, such that, for $t\geq t_0$,

(i) $L_i(t)<L_j(t)\ \Leftrightarrow\ i<j$,

(ii) If $\mu <L_0(t)$ or $\mu >L_m(t)$, then $\tor_i^S(\varphi(t), k)_{\mu}=0$.

(iii)  If $L_{i-1} (t)\leq \mu \leq L_{i}(t)$ and
$a_i t-\mu \equiv j\mod (D)$, then
$$
\dim_k\tor_i^S(\mathcal{J}_t, k)_{\mu}=Q_{i,j}(\mu ,t).\\\\
$$

\end{theorem}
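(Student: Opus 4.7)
The plan is to reduce Theorem \ref{Tor-Hilbert filtrations} to the previously established Theorem \ref{garaded algebra} by exploiting the module-finite structure supplied by the $I$-good filtration hypothesis. First I would invoke the fact, recalled in the paragraph preceding the theorem, that $\R_{\mathcal{J}} = \oplus_{n\geq 0} \mathcal{J}_n$ is a finitely generated $\R_I$-module whenever $\mathcal{J}$ is $I$-good. Composing with the natural surjection $R = S[T_1,\ldots,T_r]\twoheadrightarrow \R_I$ sending $T_i\mapsto f_i$, which already makes $\R_I$ a finitely generated bigraded $R$-module (this is the starting point of the proof of Theorem \ref{garaded algebra}), one concludes that $\R_{\mathcal{J}}$ is itself a finitely generated bigraded $R$-module.

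For part (1), once $\R_{\mathcal{J}}$ is known to be finitely generated and bigraded over $R$, the argument of \cite{BCH} used in the proof of Theorem \ref{garaded algebra} transfers essentially verbatim: it yields that $\tor_i^S(\R_{\mathcal{J}},A) = \oplus_t \tor_i^S(\mathcal{J}_t,A)$ is a finitely generated graded module over $B = k[T_1,\ldots,T_r]$ with weights $(d_j,1)$. Then the change-of-rings spectral sequence
$$E^2_{p,q} = \tor_p^A\bigl(\tor_q^S(\mathcal{J}_t,A)_\nu,\, k\bigr)\Rightarrow \tor_{p+q}^S(\mathcal{J}_t,k)_\nu$$
passes finite generation from $\tor_i^S(\mathcal{J}_t,A)$ to $\tor_i^S(\mathcal{J}_t,k)$ as a $B$-module, which is exactly the statement of (1).

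For part (2), with the finite generation of $\oplus_t \tor_i^S(\mathcal{J}_t,k)$ as a bigraded $B$-module in hand, the stated description of $\dim_k\tor_i^S(\mathcal{J}_t,k)_{\mu}$ on each strip $L_{i-1}(t)\leq \mu\leq L_i(t)$, with polynomial pieces $Q_{i,j}$ indexed by $j \bmod D$, follows directly from \cite[Proposition 4.5]{BK}. There the Hilbert function of a finitely generated bigraded module over a polynomial ring with weights $(d_j,1)$ is expressed via a vector partition function on the chamber complex of the degree matrix, and Corollary \ref{vector partition2} then delivers precisely the polynomial pieces $Q_{i,j}$ indexed by a finite quotient of $\ZZ^2$ that the statement requires. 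The linear functions $L_i(t)$ arise, as in Theorem \ref{garaded algebra}, as the edges of the chamber cut out by the degree matrix $(d_1,\ldots,d_r;1,\ldots,1)$, so the slopes $a_i$ are automatically among the $d_j$.

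The main obstacle, in my view, will lie entirely in the first step: the cited module-finiteness of $\R_{\mathcal{J}}$ over $\R_I$ is stated without explicit control on the bidegrees of the generators, and one must verify that a bihomogeneous finite generating set can indeed be chosen so that the induced $R$-module structure on $\R_{\mathcal{J}}$ respects the bigrading inherited from the filtration $\{\mathcal{J}_t\}$. Once this bigraded compatibility is secured, the remainder reduces to a near-transcription of the proof of the powers-of-an-ideal case already written out in Theorem \ref{garaded algebra}.
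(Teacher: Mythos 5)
Your proposal is correct and follows essentially the same route as the paper: finiteness of $\R_{\mathcal{J}}$ over $\R_I$ (hence over $R$), finite generation of $\oplus_t\tor_i^S(\mathcal{J}_t,k)$ over $B=k[T_1,\ldots,T_r]$, and then the chamber/vector-partition-function results of \cite{BK} for part (2). The only cosmetic difference is that the paper's proof of this theorem spells out the bigraded free resolution of $\R_{\mathcal{J}}$ over $R$ and passes to its degree-$t$ strands rather than re-invoking the change-of-rings spectral sequence from Theorem \ref{garaded algebra}, and the bigraded compatibility you flag as a possible obstacle is automatic since $\R_{\mathcal{J}}$ is a graded module over the graded ring $\R_I$.
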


\begin{proof}

Recall that $\R_\mathcal{J}$ is a finite $\R_I$-module, hence a finitely generated $\ZZ^2$-graded $R$-module. Let $F_{\bullet}$ be a  $\ZZ \times \ZZ$-graded minimal free resolution of $\R_{\mathcal{J}}$ over $R$.  Each $F_i = \oplus_{\mu ,t} R (-\mu ,- n)^{\beta_{\mu ,n}^{i}}$ is of  of finite rank due to the Noetherianity of $A$. The graded stand $F_\bullet^t := (F_\bullet)_{\ast,t}$ is a $\ZZ$-graded free resolution of $\mathcal{J}_t$ over $S = R_{(*,0)}$. Meanwhile there is a $S$-graded isomorphisme
$$
 R (-\mu ,- n)_{(*,t)} \simeq  R (-\mu)_{(*,t-n)}
$$

 Thus  $\tor^S_i(\mathcal{J}_t,A) = H_i(F_\bullet^t \otimes_S A)$ and by the above isomorphisme $\tor^S_i(\mathcal{J}_t,A)$ is subquotient of $ A(-\mu)^{\beta_{\mu ,n}^{i}} \otimes_A \left( A[T_1, \ldots, T_r]\right) _{t-n} $ Since the $A$ is Noetherian it it follows that $\tor^S_i(\mathcal{J}_t,A)$ is a finitely generated $A[T_1, \ldots, T_r]$-module. With a similar Approach one has $\tor^S_i(\mathcal{J}_t,k) = H_i(F_\bullet^t \otimes_S k).$
Moreover, taking homology respects the graded structure and therefore,
$$H_i(F_\bullet^t \otimes_S k) = H_i(F_\bullet \otimes_R R/\m +\n R)_{(*,t)},$$
where $\n = (x_1, \dots, x_n)$ is the homogeneous irrelevant ideal of $S$. It follows that  $\tor_{j}^{R}(\R_{\mathcal{J}},k) $ is a finitely generated  graded $k[T_1, \ldots, T_r]$-module. 
To proof the second fact let $E:=\{ d_1,\ldots ,d_r\}$ with $d_1<\cdots <d_r$ be a set of positive integers. For $\ell$ from
$1$ up to $r-1$, let $$\Omega_\ell :=\{ a{{d_\ell}\choose{1}}+b{{d_{\ell +1}}\choose{1}},\ (a,b)\in \RR_{\geq 0}^2\} $$ be the closed cone spanned by ${{d_\ell}\choose{1}}$ and ${{d_{\ell +1}}\choose{1}}$.  For integers $i\neq j$, let $\Lambda_{i,j}$ be the lattice spanned by ${{d_i}\choose{1}}$ and ${{d_{j}}\choose{1}}$ and $
\Lambda_\ell :=\bigcap_{i\leq \ell < j}\Lambda_{i,j}.
$
Also we set $\Lambda :=\bigcap_{i < j}\Lambda_{i,j}$ with $\Delta = \det(\Lambda)$. It follows from Theorem \ref{vector partition} that $\dim_k B_{\mu ,t}=0$ if $(\mu ,t)\not\in \Omega:= \bigcup_\ell \Omega_\ell$. Part (ii) and (iii) then follow from \cite[Lemma 4.4 and Proposition 4.5]{BK}.\\\
\end{proof}

This in particular applies to the following situations: 

\begin{itemize}
\item  $I$ is a graded ideal of $S$ and $S$ is an analytically unramified ring without
nilpotent elements. Then the integral closure filtration $\mathcal{J}=\lbrace \overline{I^n}\rbrace$ is $I$-good filtration\cite{rees}.\\

\item  $I$ is a graded ideal of $S$, then the rattliff-Rush closure filtration $\mathcal{J}=\lbrace \widetilde{I^{n}}\rbrace$ is an $I$-good filtration\cite{rr}.

\end{itemize}

\section{Acknowledgement}
We would like to express our sincere gratitude to Marc Chardin for his support and help. The first author also acknowledge the support of Iran's National Elites Foundation (INEF). We thank Shahab Rajabi for comments and suggestions.

\bibliographystyle{amsplain}
\bibliography{thebib}{}

\end{document}